\documentclass[11pt]{amsart}

\usepackage{latexsym}

\usepackage{amsmath,amsfonts,amssymb,amsthm,dsfont,bbm,mathrsfs}
\usepackage[english]{babel}
\usepackage[hidelinks]{hyperref}

\usepackage{etoolbox} 

\usepackage[authoryear]{natbib}

\makeatletter

\def\cite{\citet}

\def\@noindentfalse{\global\let\if@noindent\iffalse}
\def\@noindenttrue {\global\let\if@noindent\iftrue}
\def\@aftertheorem{%
  \@noindenttrue
  \everypar{%
    \if@noindent%
      \@noindentfalse\clubpenalty\@M\setbox\z@\lastbox%
    \else%
      \clubpenalty \@clubpenalty\everypar{}%
    \fi}}

\theoremstyle{plain}
\newtheorem{prop}{Proposition}[section]
\AfterEndEnvironment{prop}{\@aftertheorem}

\AfterEndEnvironment{definition}{\@aftertheorem}
\newtheorem{lemma}[prop]{Lemma}
\AfterEndEnvironment{lemma}{\@aftertheorem}
\newtheorem{cor}[prop]{Corollary}
\AfterEndEnvironment{cor}{\@aftertheorem}
\newtheorem{theorem}[prop]{Theorem}
\AfterEndEnvironment{theorem}{\@aftertheorem}

\theoremstyle{definition}
\newtheorem{remark}[prop]{Remark}

\numberwithin{equation}{section}
\allowdisplaybreaks

\def\be#1{\begin{equation*}#1\end{equation*}}
\def\ben#1{\begin{equation}#1\end{equation}}
\def\bes#1{\begin{equation*}\begin{split}#1\end{split}\end{equation*}}
\def\besn#1{\begin{equation}\begin{split}#1\end{split}\end{equation}}

\def\ba#1{\begin{align*}#1\end{align*}}

\def\bg#1{\begin{gather*}#1\end{gather*}}

\setlength{\multlinegap}{1em}


\def\given{\mskip 0.5mu plus 0.25mu\vert\mskip 0.5mu plus 0.15mu}
\newcounter{bracketlevel}%
\def\@bracketfactory#1#2#3#4#5#6{%
\expandafter\def\csname#1\endcsname##1{%
\global\advance\c@bracketlevel 1\relax%
\global\expandafter\let\csname @middummy\alph{bracketlevel}\endcsname\given%
\global\def\given{\mskip#5\csname#4\endcsname\vert\mskip#6}\csname#4l\endcsname#2##1\csname#4r\endcsname#3%
\global\expandafter\let\expandafter\given\csname @middummy\alph{bracketlevel}\endcsname%
\global\advance\c@bracketlevel -1\relax%
}%
}
\def\bracketfactory#1#2#3{%
\@bracketfactory{#1}{#2}{#3}{relax}{0.5mu plus 0.25mu}{0.5mu plus 0.15mu}
\@bracketfactory{b#1}{#2}{#3}{big}{1mu plus 0.25mu minus 0.25mu}{0.6mu plus 0.15mu minus 0.15mu}
\@bracketfactory{bb#1}{#2}{#3}{Big}{2.4mu plus 0.8mu minus 0.8mu}{1.8mu plus 0.6mu minus 0.6mu}
\@bracketfactory{bbb#1}{#2}{#3}{bigg}{3.2mu plus 1mu minus 1mu}{2.4mu plus 0.75mu minus 0.75mu}
\@bracketfactory{bbbb#1}{#2}{#3}{Bigg}{4mu plus 1mu minus 1mu}{3mu plus 0.75mu minus 0.75mu}
}
\bracketfactory{clc}{\lbrace}{\rbrace}
\bracketfactory{clr}{(}{)}
\bracketfactory{cls}{[}{]}
\bracketfactory{abs}{\lvert}{\rvert}
\bracketfactory{norm}{\Vert}{\Vert}
\bracketfactory{floor}{\lfloor}{\rfloor}
\bracketfactory{ceil}{\lceil}{\rceil}
\bracketfactory{angle}{\langle}{\rangle}

\newcounter{ctr}\loop\stepcounter{ctr}\edef\X{\@Alph\c@ctr}%
	\expandafter\edef\csname s\X\endcsname{\noexpand\mathscr{\X}}
	\expandafter\edef\csname c\X\endcsname{\noexpand\mathcal{\X}}
	\expandafter\edef\csname b\X\endcsname{\noexpand\boldsymbol{\X}}
	\expandafter\edef\csname I\X\endcsname{\noexpand\mathbbm{\X}}
\ifnum\thectr<26\repeat

\let\@IE\IE\let\IE\undefined
\newcommand{\IE}{\mathop{{}\@IE}\mathopen{}}
\let\@IP\IP\let\IP\undefined
\newcommand{\IP}{\mathop{{}\@IP}}
\newcommand{\Var}{\mathop{\mathrm{Var}}}
\newcommand{\TP}{\mathop{\mathrm{TP}}}
\newcommand{\Hyp}{\mathop{\mathrm{Hyp}}}
\newcommand{\Bi}{\mathop{\mathrm{Bi}}}
\newcommand{\Be}{\mathop{\mathrm{Be}}}
\newcommand{\cBi}{\mathop{\hat{\mathrm{Bi}}}}
\newcommand{\Cov}{\mathop{\mathrm{Cov}}}
\newcommand{\bigo}{\mathop{{}\mathrm{O}}\mathopen{}}

\newcommand{\law}{\mathop{{}\sL}\mathopen{}}

\let\original@left\left
\let\original@right\right
\renewcommand{\left}{\mathopen{}\mathclose\bgroup\original@left}
\renewcommand{\right}{\aftergroup\egroup\original@right}

\def\^#1{\relax\ifmmode {\mathaccent"705E #1} \else {\accent94 #1} \fi}
\def\~#1{\relax\ifmmode {\mathaccent"707E #1} \else {\accent"7E #1} \fi}
\def\*#1{\relax#1^\ast}
\edef\-#1{\relax\noexpand\ifmmode {\noexpand\bar{#1}} \noexpand\else \-#1\noexpand\fi}
\def\>#1{\vec{#1}}
\def\.#1{\dot{#1}}

\def\atop{\@@atop}
\def\%#1{\mathcal{#1}}

\renewcommand{\leq}{\leqslant}
\renewcommand{\geq}{\geqslant}
\renewcommand{\phi}{\varphi}

\newcommand{\I}{\mathop{{}\mathrm{I}}\mathopen{}}
\newcommand{\dtv}{\mathop{d_{\mathrm{TV}}}\mathopen{}}

\newcommand\indep{\protect\mathpalette{\protect\@indep}{\perp}}
\def\@indep#1#2{\mathrel{\rlap{$#1#2$}\mkern2mu{#1#2}}}

\def\parsetime#1#2#3#4#5#6{#1#2:#3#4}
\def\parsedate#1:20#2#3#4#5#6#7#8+#9\empty{20#2#3-#4#5-#6#7 \parsetime #8}
\def\moddate{\expandafter\parsedate\pdffilemoddate{\jobname.tex}\empty}

\makeatother

\begin{document}

\title [Stein's method and Narayana numbers] {Stein's method and Narayana numbers}

\author{Jason Fulman}

\address{Department of Mathematics\\
        University of Southern California\\
        Los Angeles, CA, 90089, USA}
\email{fulman@usc.edu}

\author{Adrian R\"ollin}
\address{Department of Statistics and Applied Probability\\
National University of Singapore\\
        Singapore}
\email{adrian.roellin@nus.edu.sg}

\thanks{{\it AMS Subject Classification}: 60F05}

\keywords{Narayana numbers, hypergeometric distribution, Poisson-binomial distribution, Stein's method, central limit theorem}

\date{Version of May 2020}

\begin{abstract} Narayana numbers appear in many places in combinatorics and probability, and it is known that they are asymptotically normal. Using Stein's method of exchangeable pairs, we provide an error of approximation in total variation to a symmetric binomial distribution of order~$n^{-1}$, which also implies a Kolmogorov bound of order~$n^{-1/2}$ for the normal approximation. Our exchangeable pair is based on a birth-death chain and has remarkable properties, which allow us to perform some otherwise tricky moment computations. Although our main interest is in Narayana numbers, we show that our main abstract result can also give improved convergence rates for the Poisson-binomial and the hypergeometric distributions.

\end{abstract}

\maketitle

\section{Introduction}

\noindent We use the convention that the Narayana numbers~$N(n,k)$ are defined as
\[ N(n,k) = \frac{1}{n} {n \choose k-1} {n \choose k} \ , \ 1 \leq k \leq n \]
(some authors define them as~$\frac{1}{n} {n \choose k} {n \choose k+1}$, where
$0 \leq k \leq n-1$). The Narayana numbers refine the Catalan numbers~$C_n =
\frac{1}{n+1} {2n \choose n}$, since  \[ \sum_{k=1}^n N(n,k) = C_n. \]
The Catalan numbers are ubiquitous; see the book by \cite{Stan}
for~214 objects enumerated by Catalan numbers. The Narayana numbers also
appear in interesting places, and a good discussion of them is given by \cite[Chapter~2]{Pet}. Some places (there are many others!) in combinatorics
and probability where the Narayana numbers appear are: enumerating Dyck paths by peaks,
enumerating 231-avoiding permutations by descents and enumerating non-crossing
set partitions by the number of blocks (see \cite{Pet}), in the
stationary distribution for the partially symmetric exclusion process
(see \cite{Cor}), and in the enumeration of totally positive Grassmann cells (see \cite{Wil}).

Given these appearances of the Narayana numbers, it is natural to study their
limiting distribution. We define a probability distribution
$\pi$ on the set~$\{1,\dots,n\}$ by
\[ \pi(k) = \frac{N(n,k)}{C_n}  = \frac{(n+1) {n \choose k} {n \choose k-1}}{n {2n \choose n}}. \]
We let~$K$ be a random variable which is equal to~$k$ with probability~$\pi(k)$,
and we define a random variable~$W$ by
\[ W = \frac{K - \mu_n}{\sigma_n}, \] where
\[ 
  \mu_n = \frac{n+1}{2}, 
  \qquad 
  \sigma_n^2 = \frac{(n-1)(n+1)}{4(2n-1)}.
\]
It is observed by \cite{Chen2017} that~$W$ has mean~$0$ and variance~$1$, and is
asymptotically normal. This follows from Harper's method for proving
central limit theorems for numbers with real rooted generating functions
by representing them as a sum of independent random Bernoulli random variables (see \cite{Pit} for a thorough survey of the method), together with the fact that the generating function for Narayana
polynomials has real roots; see \cite[Section 4.6]{Pet}. This approach combined with, for example, the classical Berry-Esseen bound or even a refined translated Poisson approximation of independent indicators, will  yield a rate of convergence of~$\sigma_n^{-1}$, which is order~$n^{-1/2}$; see, for example, \cite[Example~3.3]{Rollin2007a}. However, using a direct exchangeable pairs approach, we show in this article that one can obtain much better rates.

First, we define the \emph{translated (almost) symmetric binomial distribution}, which will serve as the approximating distribution. While, ideally, we would want to approximate~$K$ by the symmetric  binomial distribution~$\Bi(n,1/2)$ with~$n$ chosen to match the variance of~$K$ and shifted appropriately to match the mean of~$K$, the restriction that both~$n$ and the shift have to be integer-valued requires some care in the exact definition.

For any real number~$x$, let~$\ceil{x}$ be the smallest integer that is larger or equal to~$x$, let~$\floor{x}$ be the largest integer that is smaller than or equal to~$x$, and let~$\angle{x} = x-\floor{x}$. Note that~$x = \floor{x}+\angle{x}$ and~$x = \ceil{x}-\angle{-x}$. Assume~$\mu\in \IR$ and~$\sigma^2>0$ are given. Let~$\delta = \angle{-4\sigma^2}$, so that~$\ceil{4\sigma^2}=4\sigma^2+\delta$. Moreover, let~$t=\angle{-\mu+2\sigma^2+\delta/2}/\ceil{4\sigma^2}$. Denote by~$\cBi(\mu,\sigma^2)$ the binomial distribution~$\Bi(\ceil{4\sigma^2},1/2-t)$ shifted by~$\mu-\ceil{4\sigma^2}(1/2-t)$.

It is not difficult to check that if~$X\sim\cBi(\mu,\sigma^2)$, then~$X$ is integer-valued, that~$\IE X = \mu$ and that~$\sigma^2-1/(4\sigma^2)\leq\Var X\leq\sigma^2+1/4$; in the context of distributional approximation, we like to think of~$\cBi(\mu,\sigma^2)$ as a discrete analogue to the normal distribution with mean~$\mu$ and variance (almost)~$\sigma^2$. The fact that we cannot match the variance exactly introduces only a very small error in the setting we are concerned with.

Finally, for probability measures~$P$ and~$Q$ on~$\IZ$, define the total variation metric
\be{
  \dtv(P,Q) = \sup_{A\subset\IZ}\abs{P[A]-Q[A]}
}
One purpose of this article is to prove the following explicit result, where~$\law(K)$
denotes the law of~$K$. 

\begin{theorem} \label{thm1}For~$K$ defined as above, we have
\be{
  \dtv\bclr{\law(K),\cBi(\mu,\sigma^2)}  \leq \frac{12}{n}. 
}
\end{theorem}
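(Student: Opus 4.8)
The plan is to run Stein's method of exchangeable pairs against the binomial Stein characterisation, and to read off total variation at the very end. First I would build the pair from a birth--death chain on~$\{1,\dots,n\}$ whose stationary distribution is~$\pi$: take up-rate~$p^+(k)=\alpha(n-k)(n-k+1)$ and down-rate~$p^-(k)=\alpha k(k-1)$ for a normalising constant~$\alpha$. Since $\pi(k+1)/\pi(k)=(n-k)(n-k+1)/\bclr{k(k+1)}$, detailed balance holds, the chain stays in~$\{1,\dots,n\}$ (the rates vanish at the endpoints), and with $K\sim\pi$ and $K'$ one step of the chain the pair $(K,K')$ is exchangeable. Its remarkable feature is that the drift is exactly linear, $\IE[K'-K\mid K]=-\lambda(K-\mu)$ with $\lambda=2\alpha n$ (the constant~$\alpha$ will cancel throughout), while $\IE[(K'-K)^2\mid K]$ is an explicit quadratic in~$K$. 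Feeding the linear drift into the antisymmetry identity $\IE\bclr{(K'-K)(K-\mu)}=-\tfrac12\IE\bclr{(K'-K)^2}$ reproduces $\Var K=\sigma^2$ and, more importantly, will let me evaluate the otherwise awkward moments in closed form.

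The engine is the identity obtained by antisymmetrising a general test function over the pair. Writing $\Delta f(k)=f(k+1)-f(k)$, exchangeability gives $\IE\bclr{(K'-K)f(K)}=-\tfrac12\IE\bclr{(K'-K)(f(K')-f(K))}$; because the steps are~$\pm1$ and detailed balance converts the down-contribution into the up-contribution, this collapses to
\be{\IE\bclr{(K-\mu)f(K)}=\frac1\lambda\,\IE\bclr{p^+(K)\,\Delta f(K)}.}
This is where the closed-form rates pay off: all of the $f$-dependence on the left is turned into a single $\Delta f$ term weighted by the explicit up-rate.

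On the target side, the Stein operator of $\cBi(\mu,\sigma^2)=\Bi(m,q)$ shifted by $s=\mu-mq$, with $m=\ceil{4\sigma^2}$ and $q=1/2-t$, is $\mathcal{A}f(k)=P^+(k)f(k+1)-P^-(k)f(k)$ with $P^+(k)=q(m+s-k)$. A one-line computation gives $P^+(k)-P^-(k)=-(k-\mu)$, so $\mathcal{A}f(k)=-(k-\mu)f(k)+P^+(k)\Delta f(k)$. Solving the Stein equation $\mathcal{A}f_A=\mathbbm{1}_A-\cBi(\mu,\sigma^2)[A]$, taking expectations under~$\pi$, and substituting the displayed identity eliminates the $f$-term and leaves
\be{\IP[K\in A]-\cBi(\mu,\sigma^2)[A]=\IE\bclr{R(K)\,\Delta f_A(K)},\qquad R(k)=P^+(k)-\frac{p^+(k)}{\lambda}.}
Thus the whole discrepancy is controlled by how badly the linear target up-rate~$P^+$ fails to match the quadratic~$p^+/\lambda$.

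Everything now reduces to showing the right-hand side is~$\bigo(1/n)$. The point is that~$R$ is an explicit quadratic: its $k^2$-coefficient is exactly~$-1/(2n)$, its slope at~$\mu$ equals~$t=\bigo(1/n)$, and $R(\mu)=\bigo(1)$ (here the unavoidable mismatch $\Var\cBi\approx\sigma^2$ surfaces). Writing $R(k)=R(\mu)+t(k-\mu)-\tfrac1{2n}(k-\mu)^2$ and combining the standard binomial Stein-factor bound $\norm{\Delta f_A}=\bigo(1/\sigma^2)=\bigo(1/n)$ with the exact moments $\IE(K-\mu)=0$ and $\IE(K-\mu)^2=\sigma^2=\bigo(n)$, each of the three terms is~$\bigo(1/n)$; taking the supremum over $A\subset\IZ$ yields the total variation bound. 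I expect the main obstacle to be precisely this last step: securing the Stein-factor estimate $\norm{\Delta f_A}=\bigo(1/\sigma^2)$ uniformly in~$A$ with a constant sharp enough to deliver the explicit~$12/n$, and handling $\IE\bclr{(K-\mu)^2\Delta f_A(K)}$ more carefully than the crude bound $\norm{\Delta f_A}\,\sigma^2$ --- which is exactly the juncture at which the closed-form moments furnished by the linear-drift property of the exchangeable pair become indispensable.
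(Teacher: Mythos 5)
Your construction is, up to normalisation, exactly the paper's: with $\alpha=1/(n(n-1))$ your birth--death chain is the one in Section~\ref{sec1}, your $\lambda=2\alpha n$ is the paper's $\lambda=2/(n-1)$ (Lemma~\ref{lem1}), your shifted-binomial operator is, up to sign, the operator $\cB$ imported from R\"ollin (2008) (one checks $P^+(k)-P^-(k)=-(k-\mu)$ exactly as you say), and your identity $\IE\bclr{(K-\mu)f(K)}=\lambda^{-1}\IE\bclr{p^+(K)\Delta f(K)}$ is precisely~\eqref{17}. Where you genuinely diverge is the endgame. The paper symmetrises (adding \eqref{17} to its shifted version) to get $\IE\bclc{(K-\mu)\Theta g(K)-\sigma^2\Delta g(K)}=\IE\bclc{(S/(2\lambda)-\sigma^2)\Delta g(K)}$ with $S=\IE\clc{\I[K'\neq K]\given K}$, and then applies Cauchy--Schwarz to the \emph{centered} quadratic $S/(2\lambda)-\sigma^2$; this is why it needs $\Var S$, hence $\IE K^3$ and $\IE K^4$ (Lemmas~\ref{lem3}, \ref{lem4}, \ref{lem5}), which the paper presents as the delicate part of the argument. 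You instead keep the discrepancy uncentered and explicit: $R(k)=P^+(k)-p^+(k)/\lambda = R(\mu)+t(k-\mu)-\tfrac{1}{2n}(k-\mu)^2$ (your identification of the slope at $\mu$ as exactly $t$ and of the leading coefficient as exactly $-1/(2n)$ is correct), and termwise bounding against $\norm{\Delta f_A}_\infty\leq 1/\sigma^2$ then needs only $\IE\abs{K-\mu}\leq\sigma$ and $\IE(K-\mu)^2=\sigma^2$ --- the second moment being available from the pair itself, as in Lemma~\ref{lem2}. Note that the ``crude'' bound you were worried about is in fact sufficient: $\tfrac{1}{2n}\,\IE(K-\mu)^2\,\norm{\Delta f_A}_\infty\leq 1/(2n)$, already of the right order. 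So your route buys a real simplification --- the third- and fourth-moment computations disappear entirely --- at the cost of a somewhat larger leading term ($\abs{R(\mu)}\leq \tfrac1{16}+\tfrac{\delta}{4}+\bigo(1/n)$ contributes roughly $2.6/n$, versus roughly $0.7/n$ for the paper's Cauchy--Schwarz term); since both approaches are dominated by the $\bigo(1/\sigma^2)$ overhead terms, the final constants come out comparable.

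Two points need repair before this is a complete proof. First, the identity $\IP[K\in A]-\cBi(\mu,\sigma^2)[A]=\IE\bclr{R(K)\Delta f_A(K)}$ is not exact as stated: the Stein equation and the bound $\norm{\Delta f_A}_\infty\leq 1\wedge\sigma^{-2}$ are available on the support $\cT$ of the shifted binomial, which has only $\ceil{4\sigma^2}+1\approx n/2$ points and therefore does \emph{not} contain the support $\{1,\dots,n\}$ of $K$. You must restrict to $A\subset\cT$, extend $f_A$, and account for $\IP[K\notin\cT]$; the paper does this in~\eqref{12}--\eqref{13} via Chebyshev, at a cost of $0.61/\sigma^2\approx 4.9/n$, and the same patch works verbatim for you --- but without it your ``taking the supremum over $A\subset\IZ$'' is not justified, and this term is a non-negligible share of the final constant. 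Second, the theorem claims the explicit constant $12$, which your outline does not pin down; here it helps to observe that the claim is vacuous for $n\leq 12$ (total variation never exceeds $1$), so only the quantitative form of each of your steps for larger $n$ is needed, and the rough accounting above suggests your method would in fact land below $12/n$.
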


It is possible to deduce a Berry-Esseen-type bound from Theorem~\ref{thm1}, but the rates of convergence for an integer-valued random variable to the normal distribution in Kolmogorov distance can never be better than the scaling factor.

\begin{cor} \label{cor1} There is a universal constant~$C$ such that
\ben{\label{1}
\sup_{x\in\IR} \abs{ \IP[W \leq x] - \Phi(x)}\leq \frac{C}{n^{1/2}},
}
where~$\Phi(x)$ is the standard normal distribution.
\end{cor}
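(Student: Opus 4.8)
The plan is to chain Theorem~\ref{thm1} with the classical Berry--Esseen theorem for sums of independent Bernoulli variables. Two standard facts drive the reduction: the Kolmogorov distance is dominated by the total variation distance, and it is invariant under a common strictly increasing affine change of variables. Hence, writing~$X\sim\cBi(\mu,\sigma^2)$ with~$\mu=\mu_n$ and~$\sigma^2=\sigma_n^2$, and setting~$V=(X-\mu_n)/\sigma_n$, the map~$k\mapsto(k-\mu_n)/\sigma_n$ gives
\be{
  \sup_{x\in\IR}\babs{\IP[W\leq x]-\IP[V\leq x]}
  =\dk\bclr{\law(K),\cBi(\mu,\sigma^2)}
  \leq\dtv\bclr{\law(K),\cBi(\mu,\sigma^2)}\leq\frac{12}{n}.
}
By the triangle inequality for~$\dk$, it then suffices to show that~$\sup_x\babs{\IP[V\leq x]-\Phi(x)}\leq C_2 n^{-1/2}$ for a universal constant~$C_2$.

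To do so, I would unwind the definition of~$\cBi(\mu,\sigma^2)$: with~$m=\ceil{4\sigma_n^2}$ and~$p=1/2-t$, we may write~$X=Y+(\mu_n-mp)$ for some~$Y\sim\Bi(m,p)$, so that~$X-\mu_n=Y-\IE Y$ and~$V=(Y-\IE Y)/\sigma_n$. Since~$\sigma_n^2=(n^2-1)/\bclr{4(2n-1)}\sim n/8$, we have~$m\sim n/2$, and since the numerator in the definition of~$t$ is a fractional part, $0\leq t<1/m$, so that~$p\to1/2$; in particular~$p(1-p)$ is bounded away from~$0$ for all large~$n$. Applying the Berry--Esseen theorem to the standardized sum~$\wt V=(Y-\IE Y)/\sqrt{\Var Y}$ of the~$m$ i.i.d.\ indicators constituting~$Y$ yields
\be{
  \sup_{x\in\IR}\babs{\IP[\wt V\leq x]-\Phi(x)}\leq\frac{C_0\,\beta}{\bclr{p(1-p)}^{3/2}\sqrt m}\leq\frac{C_1}{\sqrt n},
}
where~$\beta$ is the third absolute central moment of a single Bernoulli$(p)$ variable; since~$\beta\leq1$ and~$p(1-p)\to1/4$, the ratio stays bounded and the displayed rate is~$\bigo(n^{-1/2})$.

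It remains to pass from~$\wt V$, standardized by its \emph{true} standard deviation, to~$V=s\wt V$, where~$s=\sqrt{\Var X}/\sigma_n$. The variance bounds recorded after the definition of~$\cBi$, namely~$\sigma_n^2-1/(4\sigma_n^2)\leq\Var X\leq\sigma_n^2+1/4$, together with~$\sigma_n^2\sim n/8$, give~$\abs{s-1}=\bigo(n^{-1})$. For every~$x$, using~$\{V\leq x\}=\{\wt V\leq x/s\}$,
\be{
  \babs{\IP[V\leq x]-\Phi(x)}
  \leq\babs{\IP[\wt V\leq x/s]-\Phi(x/s)}+\babs{\Phi(x/s)-\Phi(x)}
  \leq\frac{C_1}{\sqrt n}+\babs{\Phi(x/s)-\Phi(x)},
}
and since the map~$u\mapsto\abs{u}e^{-u^2/2}$ is bounded on~$\IR$, the mean value theorem gives~$\babs{\Phi(x/s)-\Phi(x)}\leq C_3\abs{1-1/s}=\bigo(n^{-1})$ uniformly in~$x$. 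Combining the three displays proves~\eq{1}.

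The computation is routine, and the only point requiring genuine care is the last step: the crude Lipschitz estimate~$\babs{\Phi(x/s)-\Phi(x)}\leq(2\pi)^{-1/2}\abs{x}\abs{1/s-1}$ is not uniform in~$x$, so one must exploit the boundedness of~$u\mapsto\abs{u}e^{-u^2/2}$ (equivalently, that the Gaussian tail suppresses the growth of~$\abs{x}$) to absorb the variance mismatch into an~$\bigo(n^{-1})$ term, which is negligible against the~$n^{-1/2}$ coming from Berry--Esseen.
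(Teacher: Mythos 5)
Your proof is correct and takes essentially the route the paper itself intends: the paper states before Corollary~\ref{cor1} that the bound is deduced from Theorem~\ref{thm1} (total variation dominates Kolmogorov distance, which is then chained with the normal approximation of the matched binomial), and you have simply supplied the details—affine invariance, Berry--Esseen for the~$\Bi(m,1/2-t)$ part, and the variance-mismatch correction—that the paper leaves implicit. Note only that the paper's Remark records an alternative, self-contained derivation via the classical Berry--Esseen bound applied to Harper's representation of~$K$ as a sum of independent indicators, which yields the explicit constant~$C=1.59$; your argument instead makes rigorous the stated deduction from Theorem~\ref{thm1}.
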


\begin{remark}
(1) The bound~\eqref{1} also follows from the classical Berry-Esseen bound for sums~$S_n =\sigma^{-1}( X_1+\dots+X_n)$ of centered and independent, but non-identically distributed random variables; we have
\ben{\label{2}
  \sup_{x\in\IR} \abs{ \IP[S_n \leq x] - \Phi(x)}\leq
  \frac{C_0\sum_{i=1}^n\IE\abs{X_i}^3}{\sigma^{3}},
}
where~$C_0$ is no bigger than~$0.5606$ as was shown by \cite{Shevtsova2010}. Since the distribution of~$K$ can be represented as a sum of independent indicator random variables, and since the third central moment of an indicator random variable can always be upper-bounded by its variance, we obtain from~\eqref{2} that the left hand side of~\eqref{1} is bounded by~$C_0\sigma_n^{-1}$, which yields that, on the right hand side of~\eqref{1}, we can take~$C=1.59$, since~$\sigma_n\geq n^{1/2}/(2\sqrt{2})$ for~$n\geq 2$, as is easy to prove.

(2) Applying a result of \cite{SS} to the exchangeable pair in this paper, one can show that
one can take~$C=10$ in Corollary~\ref{cor1}. The calculations involved are very similar to those in the present
paper, but we omit the details as the bound in the previous remark is sharper.

\end{remark}

We can also deduce a local limit theorem from Theorem~\ref{thm1}, since the difference of the point probabilities are upper bounded by the total variation distance and since the total variation rates are better than~$\sigma_n^{-1}$. It should also be fairly easy to prove a local limit theorem directly using Stirling's approximation.

\begin{cor}
There is a universal constant~$C$ such that
\be{
  \sigma_n^{1/2}\,\sup_{k\in \IZ}\bbabs{\IP[K=k]-\frac{1}{\sigma_n}\phi\bbclr{\frac{k-\mu_n}{\sigma_n}}}
  \leq \frac{C}{n^{1/2}},
}
where~$\phi(x)$ is the standard normal density.
\end{cor}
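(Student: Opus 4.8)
The plan is to combine Theorem~\ref{thm1} with a de~Moivre--Laplace (local CLT) estimate for the approximating binomial law. Write $X\sim\cBi(\mu_n,\sigma_n^2)$, so that $X=Y+c$ with $Y\sim\Bi(m,p)$, where $m=\ceil{4\sigma_n^2}$, $p=1/2-t$, and $c=\mu_n-m(1/2-t)$ is the integer shift from the construction. Since $\mu_n$ and $\sigma_n^2$ grow linearly in $n$, we have $m=n/2+\bigo(1)$ and $\abs{1-2p}=2\abs{t}\leq 2/m=\bigo(1/n)$; moreover, writing $\tau^2=\Var X$, the construction guarantees $\IE X=\mu_n$ exactly and $\abs{\tau^2-\sigma_n^2}\leq 1/4$. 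For every $k\in\IZ$ the triangle inequality gives, with $z_k=(k-\mu_n)/\sigma_n$ and $w_k=(k-\mu_n)/\tau$,
\be{
  \babs{\IP[K=k]-\tfrac{1}{\sigma_n}\phi(z_k)}
  \leq \babs{\IP[K=k]-\IP[X=k]}
  + \babs{\IP[X=k]-\tfrac{1}{\tau}\phi(w_k)}
  + \babs{\tfrac{1}{\tau}\phi(w_k)-\tfrac{1}{\sigma_n}\phi(z_k)},
}
and I would bound the three terms separately and uniformly in $k$.

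For the first term, taking the singleton $A=\{k\}$ in the definition of total variation shows that it is at most $\dtv\bclr{\law(K),\cBi(\mu_n,\sigma_n^2)}\leq 12/n$ by Theorem~\ref{thm1}. This is the step that makes the whole argument work: because the total variation rate $12/n$ is of \emph{smaller} order than the typical size $1/\sigma_n\asymp n^{-1/2}$ of the individual point probabilities, the law of $K$ inherits the local (pointwise) behaviour of $X$.

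The second term is a local limit estimate for the nearly symmetric binomial $Y$. Using Stirling's formula in the bulk $\abs{k-\mu_n}=\bigo(\sigma_n)$, I would establish the relative expansion $\IP[X=k]=\tau^{-1}\phi(w_k)\bclr{1+\bigo(1/n)}$, in which the leading (third--cumulant) Edgeworth correction is additionally damped by the factor $\abs{1-2p}=\bigo(1/n)$; this yields $\babs{\IP[X=k]-\tau^{-1}\phi(w_k)}=\bigo(n^{-3/2})$ there. Outside the bulk both quantities are super-polynomially small and dominated by the Gaussian tail, so the uniform bound $\bigo(n^{-3/2})$ persists. For the third term, only the variances differ, so setting $g_k(s):=s^{-1}\phi((k-\mu_n)/s)$ and applying the mean value theorem gives $\babs{g_k(\tau)-g_k(\sigma_n)}\leq\abs{\tau-\sigma_n}\sup_s\abs{g_k'(s)}$. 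A direct computation gives $\sup_s\abs{g_k'(s)}=\bigo(\sigma_n^{-2})=\bigo(1/n)$, while $\abs{\tau-\sigma_n}=\abs{\tau^2-\sigma_n^2}/(\tau+\sigma_n)=\bigo(n^{-1/2})$ since the numerator is at most $1/4$; hence this term is $\bigo(n^{-3/2})$ uniformly in $k$.

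Combining the three bounds yields $\sup_k\babs{\IP[K=k]-\sigma_n^{-1}\phi(z_k)}\leq 12/n+\bigo(n^{-3/2})=\bigo(1/n)$, and multiplying by $\sigma_n^{1/2}=\bigo(n^{1/4})$ gives a bound of order $n^{-3/4}$, which is in particular at most $C/n^{1/2}$ (with small $n$ absorbed into $C$ via the trivial estimate $\dtv\leq 1$). The main obstacle will be the uniform-in-$k$ control of the second term: Stirling's relative error is $\bigo(1/n)$ only in the bulk and grows as $\abs{k-\mu_n}$ increases, so one must carefully patch the bulk expansion to the tails using Gaussian tail bounds to obtain a genuinely uniform constant; everything else is a clean hand-off to Theorem~\ref{thm1}.
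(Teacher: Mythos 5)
Your proposal is correct and follows essentially the same route the paper indicates: the paper justifies this corollary only by the remark that point-probability differences are bounded by the total variation distance of Theorem~\ref{thm1}, whose rate $12/n$ beats $\sigma_n^{-1}$, combined with standard de~Moivre--Laplace/Stirling estimates for the approximating binomial. Your write-up simply fleshes out that sketch (including the variance-mismatch term, handled by the mean value theorem), and the resulting bound of order $n^{-3/4}$ after scaling by $\sigma_n^{1/2}$ indeed implies the stated $C/n^{1/2}$.
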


We will apply the following result, which we will prove later using similar ideas as those of \cite{Rollin2007a,Rollin2008a} and \cite{Barbour2018}. For its statement (and for the rest of the
paper), recall that a pair of random variables $(X,X')$ on a state space is called exchangeable if for all~$x_1$ and~$x_2$, we have $\IP[X=x_1,X'=x_2]=\IP[X=x_2,X'=x_1]$. Also, as is typical in probability theory, let~$\IE(X|Y)$ denote the expected value of~$X$ given~$Y$.

\begin{theorem}\label{thm2} Assume~$(X,X')$ is an exchangeable pair of integer-valued random variables with~$\IE X = \mu$ and~$\Var X=\sigma^2$, such that~$X'-X\in\{-1,0,1\}$ almost surely and such that 
\ben{\label{3}
  \IE\cls{X'-\mu\given X} = (1-\lambda)(X-\mu).
}
Then, with~$S = S(X)= \IE\clc{\I[X'\neq X]\given X}$,
\ben{
  \dtv\bclr{\law(X),\cBi(\mu,\sigma^2)}
  \leq \frac{\sqrt{\Var S}}{2\lambda\sigma^2} + \frac{1.4}{\sigma^2}.
  \label{4} 
}
Here~$\law(X)$ denotes the law of~$X$.
\end{theorem}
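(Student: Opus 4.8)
The plan is to set up Stein's method for the approximating distribution $\cBi(\mu,\sigma^2)$ and then control the error terms produced by the exchangeable pair. Since $\cBi(\mu,\sigma^2)$ is a shifted binomial $\Bi(m,p)$ with $m=\ceil{4\sigma^2}$ and $p=1/2-t$, I would first write down the characterizing Stein operator for this binomial law. For a shifted binomial, the natural Stein identity is of birth-death type: a function $f$ on the support satisfies $\IE\bcls{p(m-(X-c))f(X+1)-(1-p)(X-c)f(X)}=0$, where $c=\mu-m(1/2-t)$ is the shift. The first step is therefore to fix this operator $\cA f(x)=p(m-(x-c))f(x+1)-(1-p)(x-c)f(x)$ and solve the Stein equation $\cA f_A(x)=\I_A(x)-\cBi(\mu,\sigma^2)[A]$ for each target set $A\subset\IZ$, recording the standard bounds on $f_A$ and its first difference $\Delta f_A(x)=f_A(x+1)-f_A(x)$; these are the usual $\bigo(1/\sigma^2)$-type bounds for binomial Stein factors.

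The heart of the argument is to feed the exchangeable pair $(X,X')$ into this identity. The second step is to expand $0=\IE\bcls{(X'-\mu)g(X')-(X'-\mu)g(X)}$ for a suitable test function $g$ built from $f_A$, using exchangeability to symmetrize and the linear regression hypothesis \eqref{3}, namely $\IE\bcls{X'-\mu\given X}=(1-\lambda)(X-\mu)$. The key simplification comes from the fact that $X'-X\in\{-1,0,1\}$, so all increments are $\pm1$ or $0$; a Taylor-type expansion of $g(X')-g(X)$ around $X$ then terminates after the first difference and involves only $\Delta g$ and $\Delta^2 g$ evaluated at $X$. Writing $S=S(X)=\IE\bcls{\I[X'\neq X]\given X}$ for the conditional jump probability, the regression condition combined with the up/down symmetry of a birth-death reversible pair lets me identify the conditional expectations of the up-step and down-step separately: the drift is pinned down by \eqref{3} and the total jump rate by $S$. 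After dividing by $\lambda$, the leading term reproduces the Stein operator $\cA f_A$ applied to $X$, and the remainder is an explicit expression in $S$ and the known Stein factors.

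Collecting terms, I expect the bound to split into two pieces. One piece comes from replacing the random quantity $S$ by its mean: this produces a term controlled by $\IE\abs{S-\IE S}\leq\sqrt{\Var S}$, and after the $1/(2\lambda\sigma^2)$ normalization from the Stein factors this yields exactly the first summand $\sqrt{\Var S}/(2\lambda\sigma^2)$ in \eqref{4}. The second piece collects the systematic errors: the mismatch between $\Var X=\sigma^2$ and the binomial variance $mp(1-p)$ (which, as noted in the excerpt, is only $\sigma^2$ up to an additive $\bigo(1/\sigma^2)$ discrepancy), together with the second-difference remainder and the rounding of $\ceil{4\sigma^2}$ and $p=1/2-t$ away from the idealized matched parameters. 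Each of these contributes at order $1/\sigma^2$, and the constant is tracked carefully to land at $1.4/\sigma^2$.

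The main obstacle, and the step demanding the most care, will be matching the first and second conditional moments of the exchangeable pair to the binomial Stein operator precisely enough that the residual is genuinely $\bigo(1/\sigma^2)$ rather than a slower $\bigo(1/\sigma)$ or a term depending on $\lambda$ in an uncontrolled way. Concretely, the delicate point is showing that the coefficient structure forced by a nearest-neighbor exchangeable pair with linear regression is \emph{exactly} compatible with a symmetric-binomial Stein operator up to the variance mismatch, so that no $\lambda$-dependent remainder survives beyond the displayed $\sqrt{\Var S}/(2\lambda\sigma^2)$ term. This is where the ``remarkable properties'' of the birth-death construction advertised in the abstract do the real work, and where I would expect to lean on the computations of \cite{Rollin2007a,Rollin2008a} and \cite{Barbour2018} to keep the error at the right order with sharp constants.
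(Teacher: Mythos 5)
Your scaffolding is sound, and your choice of Stein operator is in fact equivalent to the paper's: writing $f(x+1)=\Theta f(x)+\tfrac12\Delta f(x)$ and $f(x)=\Theta f(x)-\tfrac12\Delta f(x)$ in your birth--death operator $\cA f(x)=p(m-(x-c))f(x+1)-(1-p)(x-c)f(x)$ turns it into exactly $-(\cB f)(x)$, where $\cB$ is the operator of \cite{Rollin2008a} used in the paper, $(\cB f)(k)=(k-\mu)\Theta f(k)-\sigma^2\Delta f(k)+a(k)\Delta f(k)$, with the corrector $a$ absorbing the rounding of $m=\ceil{4\sigma^2}$ and of $t$. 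Your identification of the one-step rates, $\IP[X'=X\pm1\given X]=\tfrac12\bclr{S\mp\lambda(X-\mu)}$, is also correct. What the paper does at this point, however, is apply exchangeability a \emph{second} time, to the jump indicators themselves: with $I_{\pm1}=\I[X'-X=\pm1]$, exchangeability gives the exact identities $\IE\clc{I_{-1}\Delta g(X-1)}=\IE\clc{I_{+1}\Delta g(X)}$ and, after replacing $g$ by $g(\cdot+1)$, $\IE\clc{I_{+1}\Delta g(X+1)}=\IE\clc{I_{-1}\Delta g(X)}$; averaging the two resulting equations yields $\IE\clc{(X-\mu)\Theta g(X)}=\frac{1}{2\lambda}\IE\clc{S\Delta g(X)}$ with \emph{no remainder}, so the whole error from the main term is $\IE\bclc{(S/(2\lambda)-\sigma^2)\Delta g(X)}$, and only $\norm{\Delta g_A}_\infty\leq 1\wedge\sigma^{-2}$ together with $\IE S=2\lambda\sigma^2$ is needed. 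This is precisely why the $\Theta$-form of the operator is the right one to match against an exchangeable pair.

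The genuine gap is your replacement of this step by a ``Taylor-type expansion of $g(X')-g(X)$ around $X$ \dots involving only $\Delta g$ and $\Delta^2 g$''. Concretely, $(X'-X)\bclr{g(X')-g(X)}=I_{+1}\Delta g(X)+I_{-1}\Delta g(X-1)$, and expanding $\Delta g(X-1)=\Delta g(X)-\Delta^2 g(X-1)$ leaves the remainder $\frac{1}{2\lambda}\IE\clc{I_{-1}\Delta^2 g(X-1)}$. Since $\IE I_{-1}=\lambda\sigma^2$, this term is of size roughly $\tfrac12\sigma^2\norm{\Delta^2 g_A}_\infty$, and for total variation test sets there is no useful uniform bound on second differences: already for $A=\{k_0\}$ the Stein equation forces $\Delta g_A$ to jump by order $1/\sigma^2$ at $k_0$, so $\sup_{A\subset\IZ}\norm{\Delta^2 g_A}_\infty$ is itself of order $1/\sigma^2$, making the remainder $\bigo(1)$ --- it destroys the bound instead of contributing at order $1/\sigma^2$. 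Two smaller points: your starting identity $0=\IE\clc{(X'-\mu)g(X')-(X'-\mu)g(X)}$ is false as written (exchangeability gives $\IE\clc{(X'-\mu)g(X')}=\IE\clc{(X-\mu)g(X)}$; the paper instead starts from the antisymmetric function $(X'-X)\bclr{g(X')+g(X)}$), and you never account for $\IP[X\notin\cT]$, where $\cT$ is the support of the shifted binomial: the Stein solution bounds only hold there, and this probability is controlled by Chebyshev under the harmless assumption $\sigma^2\geq1.4$, supplying $0.61/\sigma^2$ of the final $1.4/\sigma^2$ (the remaining $0.75/\sigma^2$ being $\IE\abs{a(X)}/\sigma^2$, which is where your variance-mismatch and rounding errors actually live).
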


One of our contributions is to show how to apply Theorem~\ref{thm2} to
prove Theorem~\ref{thm1}. This is not straightforward for two reasons.
First, it is not at all obvious how to construct an exchangeable pair
$(K,K')$ satisfying the linearity condition~\eqref{3}. As we
show in Section~\ref{sec1}, we do this using a birth-death chain on the
state space~$\{1,\dots,n\}$. We discovered this birth-death chain through
experimentation. Second, in order to compute~$\Var(S)$, it turns out to be
necessary to know the first four moments of the random variable~$K$.
The generating function for Narayana numbers is complicated.
What is known (see for instance \cite[p.~25]{Pet}) is that the bivariate generating function
\[
 C(t,z) = \sum_{n \geq 0}\sum_{k=1}^n N(n,k) t^{k-1} z^n 
\] 
is equal to
\begin{equation} \label{5} 
  C(t,z) = \frac{1+z(t-1)-\sqrt{1-2z(t+1)+z^2(t-1)^2}}{2tz}.
\end{equation} It is not at all clear how to extract the fourth moment
of~$K$ from~\eqref{5}. We show how to use properties of the
exchangeable pair~$(W,W')$ to compute the first four moments of~$W$
(of course the first and third moments are zero by symmetry).

\subsection{Further applications of Theorem~\ref{thm2}}

To close this section, we give two further applications of Theorem \ref{thm2}, our main abstract result. These applications are to sums of independent random indicators, and to the hypergeometric distribution.

\subsubsection{Poisson-binomial distribution}

We first consider sums of independent random indicators. This case has been considered, for example, by \cite{Rollin2007a} in the context of translated Poisson approximation, which has found applications in algorithmic game theory, see for example the work of \citet{Daskalakis2007} and \citet{Daskalakis2015}. If the probabilities of the indicators are near $1/2$, the resulting sum is close to symmetric which should result in better rates of approximation. The following result confirms this.

\begin{theorem}\label{thm3} Let $X=\sum_{i=1}^n \xi_i$, where $\xi_i\sim \Be(p_i)$ are independent Bernoulli random variables. Then
\ben{\label{6}
  \dtv \bclr{\law(X),\cBi(\mu,\sigma^2)} \leq \frac{1.4+0.5\sqrt{\strut\sum_{i=1}^n (1-2p_i)^2p_i(1-p_i)}}{\sum_{i=1}^n p_i(1-p_i)\strut}
}
\end{theorem}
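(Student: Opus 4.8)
The plan is to apply Theorem~\ref{thm2} to the standard resampling exchangeable pair. Let $I$ be uniformly distributed on $\{1,\dots,n\}$, independent of $(\xi_1,\dots,\xi_n)$, and let $\xi_I'$ be an independent copy of $\xi_I$; set $X' = X - \xi_I + \xi_I'$. Then $(X,X')$ is exchangeable, and $X'-X = \xi_I'-\xi_I\in\{-1,0,1\}$ almost surely, so the structural hypotheses of Theorem~\ref{thm2} hold. By independence, $\mu = \IE X = \sum_{i=1}^n p_i$ and $\sigma^2 = \Var X = \sum_{i=1}^n p_i(1-p_i)$.

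Next I would verify the linearity condition~\eqref{3} and read off $\lambda$. Conditioning on the full vector gives
\be{
  \IE\cls{X'-X\given \xi_1,\dots,\xi_n} = \frac1n\sum_{i=1}^n(p_i-\xi_i) = \frac1n(\mu - X),
}
which is a function of $X$ alone. Taking a further conditional expectation given $X$ and using the tower property yields $\IE\cls{X'-\mu\given X} = (1-\tfrac1n)(X-\mu)$, so~\eqref{3} holds with $\lambda = 1/n$.

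The only genuine work is to bound $\Var S$ for $S = \IE\clc{\I[X'\neq X]\given X}$. Computing $S$ directly is awkward, because conditioning on $X$ alone couples the indicators; instead I would first compute the conditional probability given the full vector. Since $X'\neq X$ exactly when $\xi_I'\neq\xi_I$,
\be{
  T := \IE\clc{\I[X'\neq X]\given \xi_1,\dots,\xi_n} = \frac1n\sum_{i=1}^n\bclr{\xi_i(1-p_i)+(1-\xi_i)p_i} = \frac1n\sum_{i=1}^n\bclr{p_i + \xi_i(1-2p_i)}.
}
By the tower property $S = \IE\cls{T\given X}$, and since conditional expectation does not increase variance, $\Var S \leq \Var T$. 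The constant term in $T$ is irrelevant, and independence of the $\xi_i$ gives
\be{
  \Var T = \frac1{n^2}\Var\bbclr{\sum_{i=1}^n \xi_i(1-2p_i)} = \frac1{n^2}\sum_{i=1}^n(1-2p_i)^2 p_i(1-p_i).
}

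Finally I would substitute into~\eqref{4}. With $\lambda = 1/n$ the factor $n$ cancels, so that $\sqrt{\Var S}/(2\lambda\sigma^2) \leq \tfrac{0.5}{\sigma^2}\sqrt{\sum_{i=1}^n (1-2p_i)^2 p_i(1-p_i)}$, and adding the term $1.4/\sigma^2$ produces exactly~\eqref{6}. The main (and essentially only) obstacle is the computation of $\Var S$; the device of passing to the coarser conditioning, where $\Var T \geq \Var S$ is explicitly computable by independence, is what makes the argument go through cleanly.
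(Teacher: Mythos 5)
Your proposal is correct and takes essentially the same route as the paper: the same resampling pair $X'=X-\xi_I+\xi_I'$ with $\lambda=1/n$, and the same computation of the conditional no-move probability $\tfrac1n\sum_i\bclr{\xi_i(1-p_i)+(1-\xi_i)p_i}$, whose variance gives the bound via Theorem~\ref{thm2}. The only difference is that the paper conditions on the full vector $\xi$ and says the result ``easily follows,'' whereas you explicitly justify the passage to conditioning on $X$ via $\Var S\leq\Var T$ --- a detail the paper glosses over, and a worthwhile one to spell out since $T$ is generally not a function of $X$ alone.
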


Note that \cite{Rollin2007a} obtained
\ben{\label{7}
  \dtv \bclr{\law(X),\TP(\mu,\sigma^2)} \leq \frac{2+\sqrt{\strut\sum_{i=1}^n p_i^3(1-p_i)}}{\sum_{i=1}^n p_i(1-p_i)},
}
where $\TP(\mu,\sigma^2)$ denotes a Poisson distribution shifted by an integer amount in such a way that the mean of $X$ is matched exactly and such that the variance is matched subject to rounding constraints; see also \cite{Barbour2018}. The bound \eqref{7} performs better than \eqref{6} if all of the $p_i$ are small, which is clear since that is the regime of Poisson approximation. However, \eqref{6} performs better when many of the $p_i$ are close to $1/2$ due to the factor $(1-2p_i)^2$, in which case the rate of convergence could indeed be better than $1/\sigma$. For example, if $p_i \in 1/2\pm n^{-1/2}$ for all $1\leq i\leq n$, then \eqref{6} yields a rate of order $\bigo(1/n)$ whereas the rate of \eqref{7} is only of order $\bigo(1/\sqrt{n})$. If $p_i=1/2$ for all $i$, the expression under the square root in \eqref{6} vanishes, and the constant $1.4$ could also be removed in this special case, so that the right hand side of~\eqref{6} vanishes, as it ought to.

\begin{proof}[Proof of Theorem~\ref{thm3}] Let $I$ be uniformly distributed on $\{1,\dots,n\}$, let $\xi_1',\dots,\xi_n'$ be independent copies of the $\xi_i$, and let $X' = X - \xi_I + \xi_I'$. Then it is easy to verify that \eqref{3} holds with $\lambda=1/n$. Now, letting $S(\xi)=\IE\clc{\I[X'\neq X]\given \xi}$, we have
\be{
  S(\xi) = \frac{1}{n}\sum_{i=1}^n \bclr{\xi_i(1-p_i)+(1-\xi_i)p_i} = \frac{\mu}{n}+\frac{1}{n}\sum_{i=1}^n (1-2p_i)\xi_i,
}
and \eqref{6} easily follows from Theorem~\ref{thm2}.
\end{proof}

\subsubsection{Hypergeometric distribution}

Denote by $\Hyp(N,n,m)$ the hypergeometric distribution, which is the number of good items when $n$ items are drawn without replacement from $N$ items, out of which $m$ are labelled `good'. Normal approximation of the hypergeometric is a classic; see for example the discussion of \cite{Lahiri2007}. Local limit theorems can be obtained via Stirling's approximation, since the probabilities have an explicit representation in terms of combinatorial factors. Using Harper's method, \cite{Vatutin1983} showed that the hypergeometric distribution can be represented as a sum of independent random indicators, but little is known about the individual probabilities appearing in the sum. Total variation approximations by simpler distributions have been given, for instance, by \cite{Rollin2007a} with rates of order $1/\sigma$. \cite{Mattner2018} proved Berry--Esseen-type bounds with optimal constants. The hypergeometric distribution is symmetric if either $2m=N$ or $2n=N$ (excluding the degenerate cases $m=0$ etc.), and one should expect a better rate of convergence to a symmetric binomial distribution. This is quantified by the next theorem, and to the best of our knowledge, this improved bound is new. 

\begin{theorem}\label{thm4} Let $X\sim\Hyp(N,n,m)$ with $N\geq 4$, $1\leq m < N$ and $1\leq n < N$; then 
\ben{\label{8}
  \dtv \bclr{\law(X),\cBi(\mu,\sigma^2)} \leq \frac{\strut\sqrt{6(N-2 m)^2(N-2 n)^2/N^3 +130}}{2N^{1/2}\sigma}+\frac{1.4}{\sigma^2},
}
where $\sigma^2=\Var X = \frac{mn(N-m)(N-n)}{(N-1)N^2}$.
\end{theorem}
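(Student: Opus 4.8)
The plan is to apply Theorem~\ref{thm2} to a carefully chosen exchangeable pair $(X,X')$ built from the hypergeometric sampling structure, exactly as was done for the Poisson-binomial case in the proof of Theorem~\ref{thm3}. First I would set up the natural exchangeable pair: think of the $\Hyp(N,n,m)$ variable as arising from $N$ items, $m$ of them good, with a uniformly random subset of size $n$ drawn. The cleanest construction is to label each of the $N$ items by one of two attributes (``in the sample'' versus ``good'') and to let $X$ count items carrying both labels. To generate $X'$, pick a uniformly random item currently in the sample and a uniformly random item currently outside the sample, and swap their sample-membership; then $X' = X - \I[\text{removed item good}] + \I[\text{added item good}]$, so that $X'-X\in\{-1,0,1\}$ almost surely, as Theorem~\ref{thm2} requires. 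By the symmetry of the uniform draw this pair is exchangeable.

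Next I would verify the linearity condition~\eqref{3}, that is, compute $\IE\cls{X'-\mu\given X}=(1-\lambda)(X-\mu)$ for the appropriate $\lambda$. Given $X=x$, the probability that the removed item is good is $x/n$ and the probability that the added item is good is $(m-x)/(N-n)$, so
\be{
  \IE\cls{X'-X\given X} = \frac{m-X}{N-n} - \frac{X}{n},
}
which is an affine function of $X$ with slope $-\bclr{\tfrac{1}{n}+\tfrac{1}{N-n}}=-\tfrac{N}{n(N-n)}$; one then reads off $\lambda = \tfrac{N}{n(N-n)}$ and checks the intercept is consistent with the correct mean $\mu=mn/N$. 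The heart of the argument is then the computation of $S(X)=\IE\clc{\I[X'\neq X]\given X}$ and of $\Var S$. Conditioning on $X=x$, the event $X'\neq X$ is the event that exactly one of the two swapped items is good, whose probability is an explicit quadratic in $x$; so $S$ is a quadratic polynomial in $X$, and computing $\Var S$ reduces to knowing the first four central moments of the hypergeometric distribution. These moments are classical and available in closed form, so this step is in principle routine, but organizing the algebra so that the leading terms collapse into the clean bound $\sqrt{6(N-2m)^2(N-2n)^2/N^3+130}/(2N^{1/2}\sigma)$ is where the real work lies. In particular the factor $(N-2m)^2(N-2n)^2$ must emerge naturally: it has to vanish precisely in the symmetric cases $2m=N$ or $2n=N$, which is the whole point of the theorem, so the computation of $\Var S$ must be arranged to expose this cancellation rather than obscure it.

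The main obstacle will be controlling $\Var S$ with sufficient precision. Unlike the independent-indicator case, where $S$ was linear in the $\xi_i$ and its variance was immediate, here $S$ is genuinely quadratic in $X$, so $\Var S$ involves the fourth moment and one must track cross terms carefully; a naive bound would spoil the constant and, worse, could destroy the $(N-2m)^2(N-2n)^2$ factor. I expect the key technical device to be expressing $S$ in centered form around $\mu$, writing $S = a(X-\mu)^2 + b(X-\mu) + c$, so that $\Var S$ decomposes into contributions from the second, third, and fourth central moments, each of which I can bound using the explicit hypergeometric moment formulas. Finally, substituting the resulting estimate of $\sqrt{\Var S}$ and the value $\lambda=N/(n(N-n))$ into the bound~\eqref{4} of Theorem~\ref{thm2}, and simplifying using $\sigma^2=\tfrac{mn(N-m)(N-n)}{(N-1)N^2}$, should yield~\eqref{8}, with the additive $1.4/\sigma^2$ carried over unchanged from Theorem~\ref{thm2}.
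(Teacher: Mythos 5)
Your proposal is correct and follows the same overall strategy as the paper: construct an exchangeable pair for the hypergeometric, verify the linearity condition \eqref{3}, observe that $S(X)=\IE\clc{\I[X'\neq X]\given X}$ is a quadratic polynomial in $X$, and bound $\Var S$ via the first four hypergeometric moments before invoking Theorem~\ref{thm2}. The one genuine difference is the pair itself: the paper imports the pair of R\"ollin (2007), with $\IP[X'=X+1\given X]=\frac{(m-X)(n-X)}{m(N-m+1)}$, $\IP[X'=X-1\given X]=\frac{X(N-m-n+X)}{m(N-m+1)}$ and $\lambda=\frac{N}{m(N-m+1)}$, whereas you use the Bernoulli--Laplace swap chain, giving $\lambda=\frac{N}{n(N-n)}$. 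Your pair is perfectly valid --- the swap kernel on $n$-subsets is symmetric, hence reversible with respect to the uniform distribution, and your computations of $\lambda$ and $\mu=mn/N$ are right. Remarkably, the two constructions lead to exactly the same quantity $S/\lambda=\frac{mn+(N-2m-2n)X+2X^2}{N}$ (in your case $S(X)=\frac{X(N-n-m+X)+(n-X)(m-X)}{n(N-n)}$, which expands to this), and since the bound \eqref{4} depends on the pair only through $\sqrt{\Var S}/(2\lambda\sigma^2)=\sqrt{\Var(S/\lambda)}/(2\sigma^2)$, the two choices give \emph{identical} final bounds; your route therefore costs nothing quantitatively, and has the mild advantage of being self-contained rather than citing an external construction. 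The part you leave unexecuted --- expanding $\Var\bclr{(N-2m-2n)X+2X^2}$ as $(N-2m-2n)^2\Var X+4\Var(X^2)+4(N-2m-2n)\Cov(X,X^2)$ and checking it is at most $\sigma^2\bclr{6(N-2m)^2(N-2n)^2/N^2+130N}$ --- is precisely the ``tedious calculation'' the paper carries out, and your proposed organization (centering $S$ and tracking which terms carry the factor $(N-2m)(N-2n)$) is exactly how the cancellation in the symmetric cases emerges, so what remains is algebra rather than a missing idea.
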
 

So in the case where $m$ and $n$ are of order $N$, we have that $\sigma^2$ is of order $N$. If also either $m$ or $n$ is close to $N/2$, we obtain improved bounds from \eqref{8}. Indeed, if $m=N/2$, say, the larger factor under the square root disappears, so that the overall bounds is of order $1/N$, much better than the typical order of $1/N^{1/2}$ one would obtain from \cite{Rollin2007a} or \cite{Mattner2018}.

\begin{proof}[Proof of Theorem~\ref{thm4}] For the exchangeable pair $(X,X')$ satisfying \eqref{3} with $\lambda=\frac{N}{m(N-m+1)}$, we refer to \cite{Rollin2007a}. It suffices to note that
\ba{
  \IP[X'=X+1|X] & = \frac{m-X}{m}\times\frac{n-X}{N-m+1},\\
  \IP[X'=X-1|X] & = \frac{X}{m}\times\frac{N-m-n+X}{N-m+1},
}
which yields
\be{
  S(X) =\lambda \frac{m n + (N-2m-2n)X+2 X^2}{N}.
}
Now, tedious calculations yield that
\bes{
  & \Var ((N-2m-2n)X+2 X^2) \\
  & \quad = (N-2m-2n)^2\Var(X) + 4\Var(X^2) + 4(N-2m-2n)\Cov(X,X^2)\\
  & \quad=\frac{\sigma^2}{(N-3) (N-2) (N-1)}\\
  & \quad\quad\times \bbcls{(N-2 m)^2(N-2 n)^2 (N-2) \\
  &\quad\quad\qquad +2(N-2 m)(N-2 n)  (4 m-3) (n-1) \\
  &\quad\quad\qquad+4(N-2 m) (m-1) (n-1) (2 m+2 n-1)\\
  &\quad\quad\qquad+8(m-1) m (n-1) (2 m-n-1)-(N-2 m)^2(N-2 n) }\\
  & \quad\leq\sigma^2
   \bbcls{\frac{(N-2 m)^2(N-2 n)^2}{(N-3)(N-1)} + 13N+45N+72N}\\
  & \quad\leq \sigma^2 \bbclr{\frac{6(N-2 m)^2(N-2 n)^2}{N^2} +130N}.
}
The first inequality is obtained by bounding ratios by ``constant times $N$'', using in particular that $N\geq 4$ and $1\leq m,n\leq N$; for instance, $2(N-2 m)(N-2 n)  (4 m-3) (n-1)/((N-3) (N-2) (N-1))\leq 13N$. Applying Theorem~\ref{thm2}, the final bound can be obtained after some straightforward simplifications.
\end{proof}

\section{Proof of Theorem~\ref{thm1}} \label{sec1}

\noindent We now prove our main results for Narayana numbers, using notation from the introduction. Throughout we assume that~$n \geq 2$ to avoid division by zero.
In order to study the asymptotic behaviour of~$K$ by Stein's method, we will construct
an exchangeable pair~$(K,K')$. To do this, we first define a birth-death
chain on the set~$\{1,\dots,n\}$ by
\bg{
  p(k,k+1) = \frac{(n-k)(n-k+1)}{n(n-1)}, \\
  p(k,k-1)  = \frac{k(k-1)}{n(n-1)},\\
 p(k,k)  = \frac{2(k-1)(n-k)}{n(n-1)}.
}
It is easy to see that~$\pi(i) p(i,j) = \pi(j) p(j,i)$ for all~$i$ and~$j$. Thus this
birth death chain is reversible with respect to~$\pi$. This allows us to
construct an exchangeable pair~$(K,K')$ as follows: choose~$K \in \{1,\cdots,n\}$
from~$\pi$ and then obtain~$K'$ by taking one step according to the birth-death chain.

The next result shows that the exchangeable pair~$(K,K')$ satisfies the linearity condition~\eqref{3} of Theorem~\ref{thm2}.

\begin{lemma} \label{lem1}
\[ \IE[K'-\mu|K] = \left( 1 - \frac{2}{n-1} \right) (K-\mu) .\]
\end{lemma}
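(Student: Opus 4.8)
The plan is to compute $\IE[K' \mid K]$ directly from the transition probabilities of the birth-death chain, since $K'$ is obtained from $K$ by taking exactly one step. Because $K' - K \in \{-1, 0, 1\}$, we have the clean identity
\be{
  \IE[K' - K \mid K = k] = p(k, k+1) - p(k, k-1),
}
as the holding probability $p(k,k)$ contributes nothing to the expected displacement. So the first step is to substitute the given formulas and simplify
\be{
  p(k,k+1) - p(k,k-1) = \frac{(n-k)(n-k+1) - k(k-1)}{n(n-1)}.
}

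Second, I would expand the numerator. Writing $(n-k)(n-k+1) = (n-k)^2 + (n-k)$ and $k(k-1) = k^2 - k$, the quadratic terms $(n-k)^2 - k^2 = (n - 2k)(n) = n^2 - 2nk$ combine with the linear terms $(n-k) + k = n$, giving numerator $n^2 - 2nk + n = n(n + 1 - 2k)$. Hence
\be{
  \IE[K' - K \mid K = k] = \frac{n + 1 - 2k}{n - 1}.
}

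Third, I would rewrite this in the affine form demanded by \eqref{3}. Recall $\mu = \mu_n = (n+1)/2$, so $n + 1 - 2k = 2(\mu - k) = -2(k - \mu)$. Thus $\IE[K' - K \mid K] = -\tfrac{2}{n-1}(K - \mu)$, and adding $(K - \mu)$ to both sides gives
\be{
  \IE[K' - \mu \mid K] = (K - \mu) - \frac{2}{n-1}(K - \mu) = \left(1 - \frac{2}{n-1}\right)(K - \mu),
}
which is exactly the claim, identifying $\lambda = 2/(n-1)$ for later use in Theorem~\ref{thm2}.

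This proof is essentially a routine computation with no serious obstacle; the only thing requiring care is the algebraic simplification of the numerator and the recognition that $n + 1 - 2k$ is proportional to $k - \mu$. I would also note in passing (though it is not strictly needed for the linearity identity) that the stated transition probabilities should be verified to be nonnegative and to sum to one for each $k \in \{1, \dots, n\}$, and that reversibility $\pi(i)p(i,j) = \pi(j)p(j,i)$ holds, since these facts justify that $(K, K')$ is a genuine exchangeable pair; the excerpt asserts the latter, so I would rely on it rather than reprove it here.
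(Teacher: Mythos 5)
Your proof is correct and follows essentially the same route as the paper: compute $\IE[K'-K\given K] = p(K,K+1)-p(K,K-1)$, simplify the numerator to $n(n+1-2K)$, and recognize $n+1-2K = -2(K-\mu)$. The algebra and the final identification of $\lambda = 2/(n-1)$ both match the paper's argument exactly.
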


\begin{proof} We have
\bes{
\IE[K'-K|K] & = p(K,K+1) - p(K,K-1) \\
& =    \frac{(n-K)(n-K+1) - K(K-1)}{n(n-1)} \\
& =  \frac{n-2K+1}{n-1}
 =  - \frac{2}{n-1}\bclr{K - {\textstyle\frac{n+1}{2}}}.\qedhere
}
\end{proof}

As a corollary, we obtain the following result, which is also immediate from the symmetry of the distribution~$\pi$, but it is interesting to deduce it using the pair~$(K,K')$.

\begin{cor} \label{thm5} We have
\be{
\IE K = \frac{n+1}{2}.
}
\end{cor}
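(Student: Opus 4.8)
The plan is to extract $\IE K$ directly from the linearity relation established in Lemma~\ref{lem1} by exploiting the defining property of an exchangeable pair, namely that $X$ and $X'$ have the same marginal distribution. Since $(K,K')$ is exchangeable, $K'$ and $K$ are identically distributed, so in particular $\IE K' = \IE K$. This is the one structural fact I would lean on.

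First I would take the unconditional expectation of both sides of the identity in Lemma~\ref{lem1}. By the tower property, $\IE\bclr{\IE[K'-\mu\given K]} = \IE[K'-\mu] = \IE K' - \mu$. On the right-hand side, the same operation gives $\bclr{1-\tfrac{2}{n-1}}\IE[K-\mu] = \bclr{1-\tfrac{2}{n-1}}(\IE K - \mu)$. Equating the two and using exchangeability to replace $\IE K'$ by $\IE K$, I obtain
\be{
  \IE K - \mu = \left(1-\frac{2}{n-1}\right)(\IE K - \mu).
}
Here $\mu = \tfrac{n+1}{2}$ is the centering constant appearing implicitly in Lemma~\ref{lem1}, identified from the final line of its proof.

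The equation above forces $\tfrac{2}{n-1}(\IE K - \mu) = 0$. Since $n\geq 2$, the coefficient $\tfrac{2}{n-1}$ is strictly positive and hence nonzero, so I conclude $\IE K = \mu = \tfrac{n+1}{2}$, which is exactly the claim.

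There is essentially no obstacle here: the only point requiring the slightest care is making sure the contraction factor $1-\tfrac{2}{n-1}$ is genuinely different from $1$, which is guaranteed by the standing assumption $n\geq 2$ (so that $\tfrac{2}{n-1}$ is a well-defined positive quantity). The argument is the standard observation that a linear exchangeable-pair regression of the form $\IE[X'-\mu\given X]=(1-\lambda)(X-\mu)$ with $\lambda\neq 0$ pins the mean of $X$ to $\mu$, and the whole deduction is a two-line consequence of the tower property together with $\IE K' = \IE K$.
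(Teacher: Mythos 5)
Your proposal is correct and is essentially the paper's own argument: both rest on exchangeability giving $\IE K' = \IE K$ (equivalently $\IE[K'-K]=0$) combined with the linear regression identity from Lemma~\ref{lem1}, after which the nonzero factor $\tfrac{2}{n-1}$ forces $\IE K = \mu$. The only cosmetic difference is that the paper works with $\IE[K'-K\given K] = -\tfrac{2}{n-1}(K-\mu)$ from the proof of the lemma, while you take expectations of the lemma's statement directly; these are algebraically identical.
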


\begin{proof} From the proof of Lemma~\ref{lem1} and the exchangeability of
$K$ and~$K'$, one has that
\[ 0 = \IE[K'-K] = \IE[\IE[K'-K|K]] = - \frac{2}{n-1} \IE \left[ K - \frac{n+1}{2} \right] .\]
\end{proof}

Next we use the exchangeable pair to calculate~$\IE K^2$.
The value of~$\IE K^2$ was given by \cite{Chen2017}. Our derivation uses the exchangeable pair~$(K,K')$.

\begin{lemma} \label{lem2} We have
\be{
  \IE K^2 = \frac{n^3+2n^2-1}{4n-2}.
}
\end{lemma}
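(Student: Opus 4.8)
The plan is to exploit the exchangeability of the pair $(K,K')$ exactly as in the proof of Corollary~\ref{thm5}, but now at the level of second moments. Since $(K,K')$ is exchangeable, $K$ and $K'$ are identically distributed, so $\IE[(K')^2-K^2]=0$. The idea is to expand this difference of squares as
\be{
  (K')^2-K^2 = (K'-K)(K'+K) = (K'-K)^2 + 2K(K'-K),
}
take the conditional expectation given~$K$, and reduce everything to two quantities that we already control.

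First I would handle the cross term: by Lemma~\ref{lem1} we have $\IE[K'-K\mid K] = -\frac{2}{n-1}\bclr{K-\frac{n+1}{2}}$, so $2\IE[K(K'-K)\mid K]$ is an explicit quadratic in~$K$. Second, since $K'-K\in\{-1,0,1\}$ almost surely, we have $(K'-K)^2=\I[K'\neq K]$, whence $\IE[(K'-K)^2\mid K]=p(K,K+1)+p(K,K-1)$; substituting the birth-death transition probabilities and simplifying the numerator $(n-K)(n-K+1)+K(K-1)$ gives a second explicit quadratic in~$K$.

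Adding the two conditional expectations, taking a further (unconditional) expectation, and invoking $\IE[(K')^2-K^2]=0$ then yields a single linear equation relating $\IE K^2$ and $\IE K$. The crucial structural point is that the coefficient of $K^2$ in the combined expression does \emph{not} cancel---it is proportional to $2-4n\neq0$---so the identity can actually be solved for~$\IE K^2$. Substituting $\IE K=\frac{n+1}{2}$ from Corollary~\ref{thm5} isolates $\IE K^2$ completely.

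There is no real conceptual obstacle here; the only thing to watch is the algebraic bookkeeping when the two quadratics are combined over the common denominator $n(n-1)$. I expect the computation to collapse cleanly, with the numerator factoring through $n+1$ so that $(4n-2)\IE K^2=(n+1)(n^2+n-1)=n^3+2n^2-1$, which is precisely the claimed value after dividing by $4n-2$.
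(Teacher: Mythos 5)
Your proposal is correct and follows essentially the same route as the paper: both exploit $\IE[(K')^2-K^2]=0$ from exchangeability, expand the conditional expectation via the birth-death transition probabilities, and solve the resulting linear equation for $\IE K^2$ using $\IE K=\frac{n+1}{2}$. Your decomposition $(K')^2-K^2=(K'-K)^2+2K(K'-K)$ (reusing Lemma~\ref{lem1} for the cross term) is just a repackaging of the paper's direct computation of $p(K,K+1)\bclr{(K+1)^2-K^2}+p(K,K-1)\bclr{(K-1)^2-K^2}$, and your final algebra, including the identity $(n+1)(n^2+n-1)=n^3+2n^2-1$, checks out.
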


\begin{proof}
Consider the quantity
\[ \IE[(K')^2-K^2|K]. \]
On one hand, its expected value is equal to~$\IE[(K')^2-K^2] = 0$.
On the other hand, the construction of~$(K,K')$ gives that its
expected value is equal to the expected value of
\be{
  p(K,K+1)\bclr{(K+1)^2-K^2}
 + p(K,K-1)  \bclr{(K-1)^2-K^2}.
}
Using the definition of~$p(K,K+1)$ and~$p(K,K-1)$, it follows that
\[ 0 = \IE[(n-K)(n-K+1)(2K+1) - K(K-1)(2K-1)].\] Expanding this gives
that
\[ \IE[K^2 (2 - 4 n) + n (1 + n) + 2 K (n^2-1)] = 0.\] So we can solve
for~$\IE K^2$ in terms of~$\IE K$, which we computed in Corollary~\ref{thm5}.
\end{proof}

\begin{lemma} \label{lem3} We have
\be{
  \IE K^3 = \frac{(n^2+2n-2)(n+1)^2}{8n-4}.
}
\end{lemma}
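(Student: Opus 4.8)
The plan is to mimic the derivation of $\IE K^2$ in Lemma~\ref{lem2}, but now applied to the fourth-power increment $\IE[(K')^4 - K^4 \given K]$. By exchangeability of $(K,K')$, the unconditional expectation $\IE[(K')^4 - K^4]$ vanishes, while the birth--death construction lets us compute the conditional expectation explicitly. Concretely, I would write
\bes{
  0 = \IE[(K')^4 - K^4]
  = \IE\bcls{ p(K,K+1)\bclr{(K+1)^4 - K^4} + p(K,K-1)\bclr{(K-1)^4 - K^4}}.
}
Expanding $(K\pm1)^4 - K^4 = \pm 4K^3 + 6K^2 \pm 4K + 1$ and substituting the rational forms of $p(K,K+1) = \frac{(n-K)(n-K+1)}{n(n-1)}$ and $p(K,K-1) = \frac{K(K-1)}{n(n-1)}$ produces, after multiplying through by $n(n-1)$, a polynomial identity in $K$ whose expectation is zero. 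The highest-degree term $K^4$ will cancel (as it must, since the left side is genuinely a telescoping difference whose net degree is controlled), leaving a linear relation expressing $\IE K^3$ in terms of $\IE K^2$, $\IE K$, and constants.

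First I would carry out the expansion of the bracketed expression symbolically. The term $p(K,K+1)\bclr{(K+1)^4 - K^4}$ contributes $(n-K)(n-K+1)(4K^3 + 6K^2 + 4K + 1)$ and the term $p(K,K-1)\bclr{(K-1)^4 - K^4}$ contributes $K(K-1)(-4K^3 + 6K^2 - 4K + 1)$, both divided by $n(n-1)$. I expect the leading $K^4$ contributions from the two products to cancel against each other, and similarly I would track the $K^3$ coefficient carefully, since that is what couples to the unknown $\IE K^3$. Collecting coefficients of each power of $K$ and taking expectations then yields an equation of the form $a\,\IE K^3 + b\,\IE K^2 + c\,\IE K + d = 0$ with coefficients that are polynomials in $n$. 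Solving for $\IE K^3$ and substituting the known values $\IE K = \frac{n+1}{2}$ from Corollary~\ref{thm5} and $\IE K^2 = \frac{n^3+2n^2-1}{4n-2}$ from Lemma~\ref{lem2} gives the claimed closed form.

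The main obstacle is purely the bookkeeping in the polynomial expansion: because $(K+1)^4 - K^4$ and $(K-1)^4 - K^4$ each carry four lower-order terms, and each is multiplied by a quadratic in $K$, there are many terms to collect, and an arithmetic slip in any coefficient would propagate into the final answer. The crucial checkpoint is that the coefficient of $\IE K^4$ vanishes identically, so that the relation closes at the level of the third moment rather than requiring knowledge of $\IE K^4$; verifying this cancellation is both the delicate point and the reassurance that the method terminates. Once the linear relation is in hand, the remaining work is just substitution and algebraic simplification over a common denominator of $8n-4$, which is routine.
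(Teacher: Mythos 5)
There is a genuine gap: the cancellation you call the ``crucial checkpoint'' does not hold. Writing $(n-K)(n-K+1) = K^2-(2n+1)K+n^2+n$ and $K(K-1)=K^2-K$ and expanding your bracketed expression, the $K^5$ terms do cancel, but the coefficient of $K^4$ is
\[
6 - 4(2n+1) + 6 + 4 \;=\; 12-8n \;=\; -4(2n-3),
\]
which is nonzero for every $n\geq 2$. Consequently the identity $\IE[(K')^4-K^4]=0$ yields a linear relation involving \emph{both} $\IE K^4$ and $\IE K^3$, and at this stage of the paper only $\IE K$ and $\IE K^2$ are known; a single equation in two unknowns cannot be solved for $\IE K^3$. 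Indeed, the paper uses precisely this quartic relation, but in the proof of Lemma~\ref{lem4}, and in the opposite direction: once $\IE K^3$ is available, the relation is solved for $\IE K^4$. The general pattern is that the $m$-th power increment always closes at the $m$-th moment and never lower, because the coefficient of $K^m$ in the expansion equals $m(m-1-2n)\neq 0$; so your proposed relation is structurally incapable of producing the third moment.

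There are two correct routes. The direct analogue of the proof of Lemma~\ref{lem2} is to use the \emph{cubic} increment: expanding $0=\IE[(K')^3-K^3]$ gives
\[
-6(n-1)\,\IE K^3 + (3n^2-3n-6)\,\IE K^2 + (3n^2+n)\,\IE K + n^2+n = 0,
\]
which does close at $\IE K^3$ (coefficient $-6(n-1)$), and substituting Corollary~\ref{thm5} and Lemma~\ref{lem2} recovers the stated formula. The paper's own proof is shorter still and avoids the exchangeable pair entirely: the distribution $\pi$ is symmetric about $\mu=(n+1)/2$, since $N(n,k)=N(n,n+1-k)$, so $\IE(K-\mu)^3=0$; expanding the cube expresses $\IE K^3$ directly in terms of $\IE K^2$ and $\IE K$.
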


\begin{proof} By symmetry of the distribution~$\pi$, we have~$\IE (K-\mu)^3 =0$; that is,
\be{
  \IE \left[ K^3 - 3K^2 \left( \frac{n+1}{2} \right) + 3K  \left( \frac{n+1}{2} \right)^2 -
\left( \frac{n+1}{2} \right)^3 \right] = 0.
}
Thus
\[ \IE K^3 = 3 \left( \frac{n+1}{2} \right) \IE K^2  - 3 \left( \frac{n+1}{2} \right)^2 \IE K
+ \left( \frac{n+1}{2} \right)^3 .\] The lemma now follows from Corollary~\ref{thm5} and
Lemma~\ref{lem2}.
\end{proof}

\begin{lemma} \label{lem4} We have
\be{\IE K^4 = \frac{(n^5+4n^4-3n^3-12n^2+2n+6)(n+1)}{4(2n-1)(2n-3)}
}
\end{lemma}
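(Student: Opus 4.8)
The plan is to mimic the derivation of $\IE K^2$ in Lemma~\ref{lem2}, but one order higher. Since $(K,K')$ is exchangeable, $K'$ has the same marginal law as $K$, so that $\IE\bcls{(K')^4-K^4}=0$. On the other hand, conditioning on $K$ and using the birth-death chain gives
\be{\IE\bcls{(K')^4-K^4\given K} = p(K,K+1)\bclr{(K+1)^4-K^4}+p(K,K-1)\bclr{(K-1)^4-K^4}.}
Using $(K\pm1)^4-K^4=\pm4K^3+6K^2\pm4K+1$, inserting the explicit transition probabilities, multiplying through by $n(n-1)$, and taking expectations, this becomes
\be{\IE\bcls{(n-K)(n-K+1)\bclr{4K^3+6K^2+4K+1}+K(K-1)\bclr{-4K^3+6K^2-4K+1}}=0.}

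The key observation is that, although the bracketed expression naively looks like a degree-five polynomial in $K$, the two $K^5$ contributions cancel: the first product contributes leading coefficient $+4$ and the second $-4$. The surviving leading term is $K^4$, whose coefficient works out to $-8n+2+10=-4(2n-3)$, which is nonzero for all integers $n\geq2$. Hence, after expanding and taking expectations, the displayed identity is a \emph{linear} equation in $\IE K^4$, with every remaining term expressible through $\IE K$, $\IE K^2$, and $\IE K^3$.

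To finish, I would collect the coefficients of $\IE K^3$, $\IE K^2$, $\IE K$, and the constant term, substitute the values from Corollary~\ref{thm5} and Lemmas~\ref{lem2} and~\ref{lem3}, and solve for $\IE K^4$; placing everything over the common denominator $4(2n-1)(2n-3)$ then yields the stated formula. The only genuine obstacle is the bookkeeping in this final polynomial expansion and substitution, which is routine but lengthy; conceptually everything is forced once one notices that the top-degree term cancels, so that $\IE K^4$ can in fact be isolated. As a sanity check one may verify the formula against $\pi$ directly for a small value such as $n=2$.
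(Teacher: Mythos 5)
Your proposal is correct and follows essentially the same route as the paper: both use the exchangeable pair to write $\IE\bcls{(K')^4-K^4}=0$, expand via the transition probabilities, observe that the top-degree terms cancel so the identity is linear in $\IE K^4$, and solve using the lower moments from Corollary~\ref{thm5} and Lemmas~\ref{lem2} and~\ref{lem3}. Your explicit computation of the surviving $K^4$ coefficient, $-4(2n-3)$, is a nice addition that confirms solvability, which the paper leaves implicit.
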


\begin{proof} Consider the quantity
\[ \IE[(K')^4-K^4|K]. \] On one hand, its expected value is equal to~$\IE[(K')^4-K^4]=0$.
On the other hand, the construction of~$(K',K)$ gives that its expected value is
equal to the expected value of
\bes{
& p(K,K+1) \bclr{ (K+1)^4 - K^4}
+ p(K,K-1) \bclr{ (K-1)^4 - K^4}\\
&\quad =
\frac{(n-K)(n-K+1)}{n(n-1)} \bclr{ (K+1)^4 - K^4}
+ \frac{K(K-1)}{n(n-1)} \bclr{ (K-1)^4 - K^4}
}
Since the~$K^5$ and~$K^6$ terms cancel out, this equality lets us solve for~$\IE K^4$ in terms of~$\IE K^3$,~$\IE K^2$ and~$\IE K$ (which
were computed in Lemmas~\ref{lem3} and~\ref{lem2} and Corollary~\ref{thm5}), yielding the final expression.
\end{proof}

Next, we give an exact formula for~$\Var S$, where
\[ S=S(K) = \IE[(K'-K)^2|K]. \]

\begin{lemma} \label{lem5} For~$n \geq 2$,
\be{
  \Var S = \frac{(n+1)(n-2)  }{(2n-1)^2(2n-3)(n-1)}.
}
\end{lemma}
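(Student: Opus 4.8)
The plan is to exploit the special structure of the birth--death chain. Because $K'-K\in\{-1,0,1\}$ almost surely, the squared increment equals the jump indicator, $(K'-K)^2=\I[K'\neq K]$, so the quantity $S$ in this lemma coincides with the quantity $S$ appearing in Theorem~\ref{thm2}, and it is just the total probability of moving:
\be{
  S(K)=\IE[(K'-K)^2|K]=p(K,K+1)+p(K,K-1)=\frac{(n-K)(n-K+1)+K(K-1)}{n(n-1)}.
}
First I would expand the numerator and complete the square, which produces the clean centered form
\be{
  S(K)=\frac{2(K-\mu)^2}{n(n-1)}+\frac{n+1}{2n},
}
with $\mu=\frac{n+1}{2}$; the additive constant is irrelevant for the variance, so only the quadratic term matters.

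Since an additive constant drops out and the scale factor squares, this reduces the problem to a single fourth-moment computation:
\be{
  \Var S=\frac{4}{n^2(n-1)^2}\Var\bclr{(K-\mu)^2}
  =\frac{4}{n^2(n-1)^2}\bclr{\IE(K-\mu)^4-\sigma_n^4},
}
where I used $\Var(Z)=\IE Z^2-(\IE Z)^2$ with $Z=(K-\mu)^2$ together with $\IE(K-\mu)^2=\sigma_n^2$. Thus everything hinges on the fourth central moment of $K$.

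Next I would expand $\IE(K-\mu)^4=\IE K^4-4\mu\,\IE K^3+6\mu^2\,\IE K^2-4\mu^3\,\IE K+\mu^4$ and substitute the closed forms already available: $\IE K$ from Corollary~\ref{thm5}, $\IE K^2$ from Lemma~\ref{lem2}, $\IE K^3$ from Lemma~\ref{lem3}, $\IE K^4$ from Lemma~\ref{lem4}, and $\sigma_n^2=\frac{(n-1)(n+1)}{4(2n-1)}$. Each term is an explicit rational function of $n$ with denominator dividing $16(2n-1)^2(2n-3)$, so placing $\IE(K-\mu)^4-\sigma_n^4$ over this common denominator turns the whole expression into one rational function of $n$. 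The main obstacle is purely the algebraic bookkeeping at this stage: one must expand several degree-five and degree-six polynomials, subtract, and factor the numerator. I expect a factor $n^2(n-1)$ to cancel against the prefactor $4/(n^2(n-1)^2)$ while the factors $(2n-1)^2$ and $(2n-3)$ survive in the denominator, leaving exactly $\frac{(n+1)(n-2)}{(2n-1)^2(2n-3)(n-1)}$. A convenient sanity check along the way is that $\IE(K-\mu)^4\geq\sigma_n^4$ forces the numerator to be nonnegative for $n\geq2$, and that the factor $(n-2)$ must vanish at $n=2$, where $(K-\mu)^2$ is almost surely constant and hence $\Var S=0$.
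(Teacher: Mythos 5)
Your proposal is correct and takes essentially the same route as the paper: the paper writes $S = 1 - p(K,K) = 1 - \tfrac{2(K-1)(n-K)}{n(n-1)}$ and computes $\Var[(K-1)(n-K)]$ from the same four moment lemmas, which is exactly your computation since $(K-1)(n-K) = \tfrac{(n-1)^2}{4} - (K-\mu)^2$, so $\Var[(K-1)(n-K)] = \Var[(K-\mu)^2]$. Your centered form and the resulting algebra (the fourth central moment works out to $\tfrac{(n+1)(n-1)(3n^2-4n-3)}{16(2n-1)(2n-3)}$, giving the stated answer) are just a cosmetic reparametrization of the paper's argument.
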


\begin{proof} Clearly
\bes{
S(K) & =  p(K,K+1) + p(K,K-1) \\
& = 1 - p(K,K)
 =   1 - \frac{2(K-1)(n-K)}{n(n-1)} .
}
Thus
\begin{equation} \label{9}
\Var S = \frac{4}{ n^2 (n-1)^2} \Var[(K-1)(n-K)].
\end{equation}
To compute the variance of~$(K-1)(n-K)$, we compute~$\IE[(K-1)(n-K)]$
and~$\IE[(K-1)^2(n-K)^2]$.

It follows from Corollary~\ref{thm5} and Lemma~\ref{lem2} that
\be{
\IE[(K-1)(n-K)] = - \IE K^2  + (n+1) \IE K - n = \frac{n(n-1)(n-2)}{(4n-2)}.
}
Similarly, since
$\IE[(K-1)^2(n-K)^2]$ is equal to
\be{
  \IE K^4  - (2n+2) \IE K^3  + (n^2+4n+1) \IE K^2 - (2n^2+2n) \IE K  + n^2.
}
we can apply Corollary~\ref{thm5} and Lemmas~\ref{lem2},~\ref{lem3}, and
~\ref{lem4}, to conclude that
\[
\IE\bcls{(K-1)^2(n-K)^2} = \frac{n^2(n^4-7n^3+19n^2 - 23n + 10)}{4(4n^2-8n+3)}.
\]

Thus
\[ \Var[(K-1)(n-K)] = \frac{(n+1)n^2(n-1)(n-2)  }{4(2n-1)^2(2n-3)} .\]
The lemma now follows from~\eqref{9}.
\end{proof}

Putting the pieces together, we now prove our main result on Narayana numbers.

\begin{proof}[Proof of of Theorem~\ref{thm1}] The theorem clearly holds for~$n \leq 2$. For~$n > 2$, we apply Theorem~\ref{thm2}. By Lemma~\ref{lem1},~$\IE[K'-\mu|K]=(1- \lambda) (K-\mu)$ with~$\lambda = \frac{2}{n-1}$. It follows easily from Lemma~\ref{lem5} that
\be{
  \dtv\bclr{\law(K),\cBi(\mu,\sigma^2)}
  \leq \frac{1}{n}\sqrt{\frac{n^2(n-2)  }{(2n-3)(n-1)(n+1)}} + \frac{5.6(2n-1)}{(n-1)(n+1)}
}
It is not difficult to see that the fraction inside the square root is less than~$1/2$ and that~$5.6n(2n-1)(n-1)^{-1}(n+1)^{-1} \leq 11.2$ for~$n\geq 2$.
\end{proof}

\section{Proof of Theorem~\ref{thm2}}

\noindent We only give a very compact proof, since much of the material is explained in detail by \cite{Rollin2007a,Rollin2008a}.

\begin{proof}[Proof of Theorem~\ref{thm2}]
Recall the definition of~$\cBi(\mu,\sigma^2)$ with the choices of~$n$,~$\delta$ and~$t$.
\cite[(2.8)]{Rollin2008a} showed that~$Z$ has distribution~$\cBi(\mu,\sigma^2)$ if and only if
\be{
  \IE\bclc{(Z-\mu)\Theta g(Z)-\sigma^2\Delta g(Z)+a(Z)\Delta g(Z)} = 0
}
for all, say, bounded functions~$g:\IZ\to\IR$,
where~$\Theta g(k) = (g(k+1)+g(k))/2$, where~$\Delta g(k) = g(k+1)-g(k)$, and where~$a(k) = nt^2-(k-\mu)t-\delta/4$.
This motivates the definition of the Stein operator
\be{
  (\cB g)(k) := (k-\mu)\Theta g(k)-\sigma^2\Delta g(k)+a(k)\Delta g(k)
}
and setting up the Stein equation
\ben{\label{10}
  (\cB g)(k) = \I[k\in A] - \IP[Z\in A]
}
for~$A\subset \cT:=\{0,\dots,n\}+\mu-n(1/2-t)$ and~$Z\sim\cBi(\mu,\sigma^2)$. 

Now, \cite[(2.4) and (2.8)]{Rollin2008a} showed that there is a solution~$g_A$ to~\eqref{10} that satisfies
\ben{\label{11}
   \norm{\Delta g_A}_\infty \leq 1\wedge\frac{1}{\sigma^2}.
}
Using identity~\eqref{10}, the triangle inequality and~\eqref{11}, we obtain
\besn{\label{12}
  &\dtv\bclr{\law(X),\cBi(\mu,\sigma^2)}
   = \sup_{A\subset \IZ}\abs{\IP[X\in A]-\IP[Z\in A]}\\
  &\quad = \sup_{A\subset \cT}\abs{(\cB g_A)(X)} + \IP[X\not\in\cT]\\
  &\quad\leq \sup_{A\subset \cT}\babs{\IE\clc{(X-\mu)\Theta g(X)-\sigma^2\Delta g(X)}} + \frac{\IE \abs{a(X)}}{\sigma^2} + \IP[X\not\in\cT].
}

We can use Chebychev's inequality to bound
\ben{\label{13}
  \IP[X\not\in\cT]
  \leq\IP[\abs{X-\mu}\geq 2\sigma^2-1]
  \leq \frac{\sigma^2}{(2\sigma^2-1)^2}\leq \frac{0.61}{\sigma^2},
}
where the last inequality holds as long as~$\sigma^2\geq 1.4$, which we may assume without loss of generality since otherwise~\eqref{4} is trivial. Moreover,
\ben{\label{14}
  \IE\abs{a(X)}\leq nt^2+\sigma t+\delta/4 \leq 3/4
}
(see \cite[after (2.17)]{Rollin2008a} for the second inequality).

It remains to bound the first expression on the right hand side of~\eqref{12}; we follow the line of argument of~\cite{Rollin2007a}. Using exchangeability and anti-symmetry,
\besn{\label{15}
  0 & = \IE\bclc{(X'-X)\bclr{g(X')+g(X)}} \\
  &= \IE\bclc{(X'-X)\bclr{g(X')-g(X)}} + 2\IE\clc{(X'-X)g(X)}
}
Using~\eqref{3}, the second term equals
\be{
  2\IE\clc{(X'- X)g(X)} = -2\lambda\IE\clc{(X-\mu)g(X)}.
}
so that~\eqref{15} can be written as
\ben{\label{16}
  \IE\clc{(X-\mu)g(X)} = \frac{1}{2\lambda}\IE\bclc{(X'-X)\bclr{g(X')-g(X)}}.
}
To simplify the right hand side of~\eqref{16}, let~$I_i :=\I[X'-X=i]$ for~$i\in\{-1,+1\}$, and making the case distinction whether~$X'-X=+1$ or~$-1$, write
\bes{
  & \IE\bclc{(X'-X)\bclr{g(X')-g(X)}}
   = \IE\clc{I_{+1}\Delta g(X)} + \IE\clc{I_{-1}\Delta g(X-1)}.
}
Using exchangeability,
\be{
  \IE\clc{I_{-1}\Delta g(X-1)} =
  \IE\clc{I_{+1}\Delta g(X)},
}
so that~\eqref{16} yields
\ben{\label{17}
  \IE\clc{(X-\mu)g(X)} = \frac{1}{\lambda}\IE\clc{I_{+1}\Delta g(X)}
}
Replacing~$g(X)$ by~$g(X+1)$ and using exchangeability again,
\ben{\label{18}
  \IE\clc{(X-\mu)g(X+1)} = \frac{1}{\lambda}\IE\clc{I_{+1}\Delta g(X+1)}
  = \frac{1}{\lambda}\IE\clc{I_{-1}\Delta g(X)}
}
Adding~\eqref{17} and~\eqref{18} and dividing by two,
\be{
  \IE\clc{(X-\mu)\Theta g(X)} = \frac{1}{2\lambda}\IE\clc{(I_{+1}+I_{-1})\Delta g(X)} = \frac{1}{2\lambda}\IE\clc{S\Delta g(X)},
}
and it follows that
\ben{\label{19}
  \IE\clc{(X-\mu)\Theta g(X)-\sigma^2\Delta g(X)} = \IE\bbclc{\bbclr{\frac{S}{2\lambda}-\sigma^2}\Delta g(X)}.
}

Now, noticing that~$\IE S = 2\lambda\sigma^2$, the right hand side of \eqref{19} can be bounded by
\ben{\label{20}
  \frac{\sqrt{\Var S}}{2\lambda}\norm{\Delta g}_{\infty} \leq \frac{\sqrt{\Var S}}{2\lambda\sigma^2}.
}
Combining bounds~\eqref{13},~\eqref{14}, and~\eqref{20} with~\eqref{12} yields the claim.
\end{proof}

\section*{Acknowledgements} 

\noindent JF was partially supported by Simons Foundation Grant 400528, and AR was partially supported by NUS Research Grant R-155-000-167-112. The authors thank Kyle Petersen for helpful discussions.

\end{document}